\numberwithin{equation}{section}
\numberwithin{figure}{section}
\theoremstyle{plain}
\newtheorem{thm}{\protect\theoremname}
\date{}
\providecommand{\theoremname}{Theorem}
\DeclareMathOperator{\Rep}{\mathbf{Rep}}
\DeclareMathOperator{\Ind}{\mathrm{Ind}}
\DeclareMathOperator{\Hom}{\mathrm{Hom}}
\DeclareMathOperator{\ad}{\mathrm{ad}}
\begin{document}

\title{Genericity under parahoric restriction}

\author{
Manish Mishra\thanks{University of Heidelberg, Germany, electronic address: \url{manish.mishra@gmail.com}} 
\and
Mirko R\"osner\thanks{University of Heidelberg, Germany, electronic address: \url{mirko_rosner@hotmail.com}}
}





\maketitle
\begin{abstract}
We study the preservation of genericity under parahoric restriction of depth zero representations. 
\end{abstract}

\section{Introduction}

Let $\mathbf{G}$ be a connected reductive group defined over a non-archimedean
local field $k$.  Let $\mathbf{B}$ be a $k$-Borel subgroup of $\mathbf{G}$ with unipotent
radical $\mathbf{U}$ and let $\mathbf{T}$ be a maximal $k$-torus in $\mathbf{B}$. The corresponding groups of $k$-rational points are $G$, $B$, $U$, $T$. A character
$\psi:U\rightarrow\mathbb{C}^{\times}$ is called generic if the
stabilizer of $\psi$ in $T$ is exactly the center $Z$ of
$G$. An  admissible representation $\pi$ of $G$ is
called \textit{generic}\textbf{ }(more specifically $\psi$-\textit{generic})
if there exists a generic character $\psi$ of $U$ such that $\Hom_{G}(\pi,\Ind_{U}^{G}\psi)\neq0$. 

A basic result due to Rodier \cite[Thm.\,2]{Ro73} states that genericity is preserved under
the Jacquet functor. On the category of depth zero representations
of $G$ (see \cite{MP96} for the notion of depth), there is a functor analogous to the Jacquet functor called the \textit{parahoric restriction}
functor \cite{vig03}. It is defined as follows. Let $G_x$ be a parahoric subgroup
of $G$ with pro-unipotent radical $G_x^{+}$. The quotient $G_x/G_x^{+}$
is the $\mathbb{F}_{q}$-points of a connected reductive group $\underline{\mathbf{M}}$
defined over $\mathbb{F}_{q}$, where $\mathbb{F}_{q}$ is the residue
field of $k$. The parahoric restriction functor sends a representation
$(\pi,V)$ of $G$ to the representation of $\underline{\mathbf{M}}(\mathbb{F}_{q})$
obtained by restricting $\pi$ to $G_x$ and then taking the $G_x^{+}$-invariants
of it. 

%

We say $\mathbf{G}$ is \textit{unramified}, if it is quasisplit and splits over an unramified extension.
Suppose that $\mathbf{G}$ is unramified, that $G_x$ is contained in a hyperspecial parahoric subgroup of $G$
and that $\pi$ is parabolically induced from a supercuspidal representation of a Levi subgroup of $G$. Then
we show in Theorem \ref{thm:main} that the parahoric restriction functor preserves
genericity of $\pi$.
This does not generalize to arbitrary parahorics and admissible representations, see Section \ref{sec:example}.

\section{Notations}
Fix a non-archimedean local field $k$ and an unramified connected reductive group $\mathbf{G}$ over $k$.  Let $\mathbf{B}=\mathbf{T}\ltimes \mathbf{U}$ be a $k$-Borel subgroup of $\mathbf{G}$
with a maximal $k$-torus $\mathbf{T}$ and a unipotent
radical $\mathbf{U}$. Let $\mathbf{Z}$ be the center of $\mathbf{G}$ and let $\mathbf{T}_{\ad}:=\mathbf{T}/\mathbf{Z}$.
Their groups of $k$-rational points are $G$, $B$, $T$, $U$, $Z$ and $T_{\ad}$, respectively.
We will follow the standard abuse of notation to write ``parabolic subgroups
of $G$'' instead of ``$k$-points of $k$-parabolic subgroups of $\mathbf{G}$''. 

For a point $x$ in the Bruhat-Tits building of $G$, the associated parahoric subgroup will be denoted by $G_x$. Its Levi quotient is $\underline{G_x}=G_x/G_x^+$ with the pro-unipotent radical $G_x^+$.
We will denote by $\Rep(G)$ the category of admissible complex representations of $G$ and likewise for the other groups.
\section{Parahoric restriction functor}
Fix a point $x$ in the apartment attached to $T$. Restricting an admissible representation $(\pi,V)$ of $G$ to the parahoric $G_x$ and taking invariants with respect to the pro-unipotent radical $G_x^+$ gives rise to a representation of $\underline{G_x}=G_x/G_x^+$. This defines the parahoric restriction functor
\begin{equation}
\mathbf{r}_{G_x}:\Rep(G)\to\Rep(\underline{G_x})\,,
\qquad
\begin{cases}
  (\pi,V)\longmapsto \left(\underline{G_x}\to \mathrm{Aut}(V^{G_x^+})\right),\\
 (V_1\to V_2) \longmapsto  \left(V_1^{G_x^+}\to V_2^{G_x^+}\right),
 \end{cases}
\end{equation}
where $V_1\to V_2$ is a morphism between admissible representations $(\pi_1,V_1)$ and $(\pi_2,V_2)$. This functor is exact and defines a homomorphism between the corresponding Grothen\-dieck groups, analogous to Jacquet's functor of parabolic restriction.

For hyperspecial parahorics, parahoric restriction commutes with parabolic induction in the following sense:
\begin{thm}\label{thm:commutes_with_par_ind}
Let $x$ be a hyperspecial point in the apartment attached to $T$ with corresponding hyperspecial parahoric subgroup $G_x\subseteq G$. Fix a standard parabolic subgroup $P\supseteq B$ with Levi decomposition $P=M\ltimes N$ such that $T\subseteq M$. Then the following diagram is commutative up to natural equivalence:
\begin{align*}
\begin{xy}
\xymatrix{
\Rep(G)\ar[r]^-{\mathbf{r}_{G_x}}                    &      \Rep(\underline{G_x})             \\
\Rep(M)\ar[r]_-{\mathbf{r}_{M_x}}\ar[u]^{\Ind_P^G}   &      \Rep(\underline{M_x})\text{.}\ar[u]_{\Ind_{\underline{P}}^{\underline{G_x}}}
}
\end{xy} 
\end{align*}
\end{thm}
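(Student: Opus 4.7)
The plan is to construct an explicit natural isomorphism between the two compositions of functors by writing the induced representation as a space of functions on $G_x$ and then descending to $\underline{G_x}$. Fix $(\sigma,W)\in\Rep(M)$, extended trivially on $N$ to a representation of $P$, and realize $\Ind_P^G\sigma$ as the space of smooth $f:G\to W$ with $f(pg)=\delta_P(p)^{1/2}\sigma(p)f(g)$.

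The first step uses that $x$ is hyperspecial: by Bruhat--Tits, one has the Iwasawa decomposition $G=P\cdot G_x$. Restriction $f\mapsto f|_{G_x}$ is therefore a $G_x$-equivariant bijection onto $\Ind_{P_x}^{G_x}(\sigma|_{P_x})$, where $P_x:=P\cap G_x$ and the modular character disappears because $P_x$ is compact and hence unimodular, forcing $\delta_P|_{P_x}=1$.

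The second step is the computation of $G_x^+$-invariants. Here the second Bruhat--Tits input enters: for hyperspecial $x$ the reduction $G_x\twoheadrightarrow\underline{G_x}$ restricts to surjections $M_x\twoheadrightarrow\underline{M}$, $N_x\twoheadrightarrow\underline{N}$, and $P_x\twoheadrightarrow\underline{P}$ (with $M_x=M\cap G_x$, etc.), whose kernels are the intersections $M_x^+, N_x^+, P_x^+$ with $G_x^+$. A right-$G_x^+$-invariant $f$ descends to $\bar f:\underline{G_x}\to W$. Applying the defining $P_x$-equivariance to $m\in M_x^+\subseteq G_x^+$ (where $\bar m=1$) yields $\sigma(m)f(g)=f(g)$ for all $g$, so $\bar f$ takes values in $W^{M_x^+}=\mathbf{r}_{M_x}(\sigma)$; applying it to $n\in N_x^+$ is automatically consistent since $\sigma|_N=1$. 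The resulting $\bar f$ is $\underline{P}$-equivariant with respect to the inflation of $\mathbf{r}_{M_x}(\sigma)$ from $\underline{M}$ to $\underline{P}$, which exhibits the desired isomorphism with $\Ind_{\underline{P}}^{\underline{G_x}}\mathbf{r}_{M_x}(\sigma)$. Naturality in $\sigma$ is manifest from the pointwise nature of the construction.

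The routine parts are bookkeeping (modular character, descent of functions), and the actual substance lies in invoking the two hyperspecial inputs: the Iwasawa decomposition $G=PG_x$ and the surjective reduction $P_x\twoheadrightarrow\underline{P}$ compatible with the Levi decomposition. These are the main obstacle in the sense that both fail for non-hyperspecial parahorics, which is exactly what forces the hypothesis in the theorem.
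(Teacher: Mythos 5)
Your proof is correct and takes essentially the same route as the paper: realize $\Ind_P^G\sigma$ as a space of functions, restrict to $G_x$ via the Iwasawa decomposition $G=PG_x$ (killing the modulus character on the compact group), pass to $G_x^+$-invariants, and descend to $\underline{G_x}$ to obtain $\Ind_{\underline{P}}^{\underline{G_x}}\mathbf{r}_{M_x}(\sigma)$ naturally in $\sigma$. The only cosmetic difference is that you verify the invariance of the values by splitting $P\cap G_x^+$ into its $M$- and $N$-parts, whereas the paper conjugates $p\in P\cap G_x^+$ across $g$ using normality of $G_x^+$ in $G_x$.
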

The parabolic subgroup $\underline{P}=(G_x\cap P)/(G_x^+\cap P)$ of $\underline{G_x}$ is generated by the same roots as $P$ and has Levi subgroup $\underline{M_x}$.

\begin{proof} 
The parahoric subgroup of $M$ attached to $x$ is $M_x\cong G_x\cap M$, cp.~\cite{MP96}.

Fix an admissible representation $(\sigma,V)$ of $M$ and denote its inflation to $P$ by the same symbol. We construct a natural equivalence $(\Ind_P^G(\sigma))^{G_x^+}\to \Ind_{\underline{P}}^{\underline{G_x}}(\mathbf{r}_{M_x}(\sigma))$. Then $\Ind_P^G(\sigma)$ has a canonical model by right multiplication on the space of functions $f:G\to V$ with
\begin{equation*}
 f(pg)=\delta_P^{1/2}(p)\sigma(p)f(g)\qquad \forall p\in P,\;\; g\in G,
\end{equation*} where $\delta_P$ is the modulus character. By the Iwasawa decomposition $G=PG_x$, every such $f$ is uniquely defined by its restriction $\tilde{f}=f|_{G_x}$ to $G_x$.  The linear map $f\mapsto \tilde{f}$ is thus a $G_x$-equivariant isomorphism from $\Ind_P^G(\sigma)$ to the space of $\tilde{f}:G_x\to V$ with 
\begin{align*}\tilde{f}(pg)=\sigma(p)\tilde{f}(g)\qquad \forall p\in G_x\cap P,\;\; g\in G_x.\end{align*}
Such an $\tilde{f}:G_x\to V$ is invariant under the right action of $G_x^+$ if and only if $\tilde{f}(xu)=\tilde{f}(x)$ for every $u\in G_x^+$ and $x\in G_x$. In that case we have \begin{align*}\sigma(p)\tilde{f}(g)=\tilde{f}(pg)=\tilde{f}(g g^{-1}pg)=\tilde{f}(g)\qquad\forall p\in P\cap G_x^+,\;\; g\in G_x,\end{align*} since $g^{-1}pg\in G_x^+$. Hence $\tilde{f}$ actually maps into the invariant space $V^{P\cap G_x^+}$.

Now $\tilde{f}$ factors over a unique function $h_f:\underline{G_x}\to V^{P\cap G_x^+}$ with the property
\begin{equation*}h_f(\underline{p}\underline{g})=\mathbf{r}_{M_x}(\sigma)(\underline{p})h_f(\underline{g})\qquad\forall \underline{p}\in\underline{P}\;\;\underline{g}\in\underline{G}.
\end{equation*}  The action of $\underline{G_x}$ by right-multiplication on the space of these $h_f$ is a model of the induced representation $\Ind_{\underline{P}}\circ \mathbf{r}_{M_x}(\sigma)$ of $\underline{G_x}$.
The family of isomorphisms $\{f\mapsto h_f\}_\sigma$ provides the natural equivalence.
\end{proof}

\section{Generic depth zero}
A character $\psi:U\rightarrow\mathbb{C}^{\times}$ is called \textit{generic} if its stabilizer in $T_{\ad}$ is
trivial. Let $z$ be a hyperspecial vertex of $G$ in the apartment attached to $T$. A generic character $\psi$ of $U$ is called \textit{depth-zero} at $z$ if its restriction to $U\cap G_z$ factors through a generic character $\psi_{z}$ of  $\underline{U}:=(U\cap G_z)/(U\cap G_z^+)$.

\begin{thm}\label{thm:main}
Let $\pi=\Ind_{P,M}^G(\sigma)$ be an irreducible admissible representation of $G$ that is parabolically induced from a supercuspidal irreducible representation $\sigma$ of a Levi subgroup $M$ of a parabolic $P\subseteq G$. Let $G_{x}$ be a parahoric subgroup of $G$, contained in a hyperspecial parahoric subgroup, such that $\underline{\pi}:=\mathbf{r}_{G_x}(\pi)\neq0$.  Then $\pi$ being generic implies that $\underline{\pi}$ is generic. 
\end{thm}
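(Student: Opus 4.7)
The plan is to reduce the theorem to preservation-of-genericity facts for finite reductive groups together with a transfer statement for depth-zero supercuspidals. Because $G_x\subseteq G_z$ with $G_z$ hyperspecial, we have $G_z^+\subseteq G_x^+$, and the image $\underline{Q}:=G_x/G_z^+$ is a parabolic subgroup of $\underline{G_z}$ with Levi $\underline{G_x}$ and unipotent radical $\underline{N}:=G_x^+/G_z^+$. Therefore
\[
\mathbf{r}_{G_x}(\pi)\;=\;\bigl(\mathbf{r}_{G_z}(\pi)\bigr)^{\underline{N}}
\]
is the Harish-Chandra restriction of $\mathbf{r}_{G_z}(\pi)$ along the parabolic $\underline{Q}$. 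Since the finite-group analog of Rodier's theorem asserts that Harish-Chandra restriction in a finite reductive group preserves genericity whenever the result is nonzero, it suffices to prove that $\mathbf{r}_{G_z}(\pi)$ is generic.

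Applying Theorem~\ref{thm:commutes_with_par_ind} at the hyperspecial vertex $z$ gives
\[
\mathbf{r}_{G_z}(\pi)\;\cong\;\Ind_{\underline{P}}^{\underline{G_z}}\mathbf{r}_{M_z}(\sigma),
\]
where $\underline{P}$ is the parabolic subgroup of $\underline{G_z}$ with Levi $\underline{M_z}$. Harish-Chandra induction in finite reductive groups preserves genericity: extending a generic character of the unipotent radical of a Borel of $\underline{M_z}$ to one of $\underline{G_z}$ and invoking Frobenius reciprocity transports the Whittaker functional. So we are further reduced to proving that $\mathbf{r}_{M_z}(\sigma)$ is generic.

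Since $\pi$ is generic, Rodier's $p$-adic theorem applied to the Jacquet module of $\pi$ along $P$, combined with the Bernstein-Zelevinsky geometric lemma and the irreducibility of $\sigma$, implies that $\sigma$ itself is generic with respect to some depth-zero generic character $\psi_M$ of $U\cap M$. The question thus becomes: does genericity of the depth-zero supercuspidal $\sigma$ with nonzero parahoric invariants at the hyperspecial $M_z$ transfer to genericity of its parahoric restriction $\mathbf{r}_{M_z}(\sigma)$ with respect to the reduction of $\psi_M$, which is a generic character on the unipotent radical of a Borel of $\underline{M_z}$? Because $z$ is hyperspecial, by the Moy-Prasad classification $\sigma\cong\cInd_{M_z Z_M}^{M}(\widetilde\tau)$ for some extension $\widetilde\tau$ of the inflation of a cuspidal $\tau\cong\mathbf{r}_{M_z}(\sigma)$ of $\underline{M_z}$, where $Z_M$ denotes the center of $M$. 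A Mackey decomposition of $\Hom_{U\cap M}(\sigma,\psi_M)$ into double cosets $(U\cap M)\backslash M/M_zZ_M$ identifies the ``principal'' double coset's contribution with $\Hom_{\underline{U\cap M_z}}(\tau,\underline{\psi_M})$, and the task is to argue that only this principal coset can contribute for the depth-zero hyperspecial datum, so that $\sigma$ being $\psi_M$-generic forces $\tau$ to be generic.

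The main obstacle is precisely this transfer: controlling the Mackey decomposition of the Whittaker functional of $\sigma$ so as to descend it to a nonzero finite-group Whittaker functional on the cuspidal datum $\tau$. The remaining ingredients --- Theorem~\ref{thm:commutes_with_par_ind}, Rodier's $p$-adic theorem, and preservation of genericity under Harish-Chandra induction and restriction in finite reductive groups --- are classical or already established in the paper.
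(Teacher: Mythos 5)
Your overall skeleton coincides with the paper's: reduce to a hyperspecial maximal parahoric via the factorization of $\mathbf{r}_{G_x}$ through $\mathbf{r}_{G_z}$ followed by Harish-Chandra restriction, use Theorem~\ref{thm:commutes_with_par_ind} to write $\mathbf{r}_{G_z}(\pi)\cong\Ind_{\underline{P}}^{\underline{G_z}}\mathbf{r}_{M_z}(\sigma)$, and invoke Rodier-type preservation of genericity under parabolic induction and restriction both $p$-adically and over $\mathbb{F}_q$. But the step you yourself flag as ``the main obstacle'' --- that genericity of the depth-zero supercuspidal $\sigma$ with respect to $\psi_M$ forces genericity of the cuspidal representation $\tau\cong\mathbf{r}_{M_z}(\sigma)$ of $\underline{M_z}$ with respect to the reduced character --- is precisely the heart of the theorem, and your proposal does not prove it. The Mackey decomposition of $\Hom_{U\cap M}(\sigma,\psi_M)$ over $(U\cap M)\backslash M/M_zZ_M$ does not by itself yield the conclusion: you must rule out that the Whittaker functional is supported entirely on non-principal double cosets, and you must also address whether the hyperspecial point at which the reduction of $\psi_M$ is generic and compatible with $\tau$ is the same point $z$ at which you are restricting. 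Neither claim is a formality, and you give no argument for either.

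The paper closes exactly this gap by quoting DeBacker--Reeder (their Lemma 6.1.2): for a generic depth-zero supercuspidal $\sigma$ there exist a hyperspecial point $y$ of $M$ at which $\psi_M$ has depth zero and a cuspidal representation $\tau^{\circ}$ of $\underline{M_y}$ that is $(\psi_M)_y$-generic, with $\sigma=\text{c-}\Ind_{[M_y]}^{M}\tau$ for an extension $\tau$ of $\tau^{\circ}$ to $[M_y]=Z_MM_y$. It then identifies $y$ with $x$ using the argument in the proof of Yu's Lemma 3.3(ii), and identifies $\tau^\circ$ with $\mathbf{r}_{M_x}(\sigma)$ via a result of Vign\'eras. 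If you want a self-contained proof along your Mackey lines you would in effect be reproving the DeBacker--Reeder lemma; as written, your argument establishes the (comparatively routine) outer reductions but leaves the essential transfer statement unproven.
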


\begin{proof}
By conjugating $x$ and $P$ if necessary, we can assume without loss of generality that $P \supseteq B$, that $M \supseteq T$, that $x$ is a point in the apartment associated to $T$ and that $\pi$ is generic with respect to a generic character $\psi$ of $U$. We can assume without loss that $G_x$ is a hyperspecial maximal parahoric subgroup of $G$ because of transitivity of parahoric restriction \cite[4.1.3]{vig03} and Rodier's result \cite[Thm.\,2]{Ro73}.

If $w_{o}$ is an element in the normalizer of
$T$ such that $B\cap w_{o}Bw_{o}^{-1}=T$, then $Q:=M\cap w_{o}Uw_{o}^{-1}$
is a maximal unipotent subgroup of $M$ \cite[Thm.\,2]{Ro73}. Define a generic character of $Q$ by
$\psi_{M}(q)=\psi(w_o^{-1}qw_o)$ for $q\in Q$.
Then by \cite[Thm.\,2]{Ro73},
\begin{equation}
\Hom_{U}(\pi,\psi)\cong\Hom_{Q}(\sigma,\psi_{M}).
\end{equation}
 Since $\underbar{\ensuremath{\pi}}\neq 0$, Theorem \ref{thm:commutes_with_par_ind} implies that $\sigma$ is a depth zero supercuspidal representation of $M$. By \cite[Lemma 6.1.2]{DeRe09}, there is a hyperspecial point $y$ of $M$ and a cuspidal representation $\tau^{\circ}$ of $\underline{M_y}$ such that 
\begin{enumerate}
\item[a)] $\psi_{M}$ is depth zero at $y$ and $\tau^{\circ}$ is $(\psi_{M})_{y}$-generic.
\item[b)] There is an extension of $\tau^{\circ}$ to a representation $\tau$ of the normalizer $[M_y]$ of $M_y$ in $M$ such that $\sigma$=c-$\Ind_{[M_y]}^M\tau$. Note that since $y$ is a hyperspecial point of $M$, $[M_y]=Z_MM_y$, where $Z_M$ denotes the center of $M$.

\end{enumerate}
Since $\sigma$ has depth zero at $x$ by Thm.~\ref{thm:commutes_with_par_ind}, we can assume without loss of generality that $x=y$ (see proof of \cite[Lemma 3.3(ii)]{Yu01}). We have therefore
\begin{equation}
\Hom_{\underline{Q}}(\tau^{\circ},(\psi_{M})_{x})\neq0,
\end{equation}
where $\underline{Q}$ denotes a maximal unipotent subgroup of 
$\underline{M_{x}}$ defined in the same way as $Q$. Theorem \ref{thm:commutes_with_par_ind} and a result of Vign\'{e}ras \cite[\S7]{Vigneras_Barcelona} imply that $\underline{\pi}$ is isomorphic to $\Ind_{\underline{P}}^{\underline{G_x}}\tau^{\circ}$.
Then again by \cite[Thm.\,2]{Ro73},
\begin{equation}
\Hom_{\underline{U}}(\underline{\pi},\psi_{x})\cong\Hom_{\underline{Q}}(\tau^{\circ},(\psi_{M})_{x})\neq0.
\end{equation}
 This completes the proof.
\end{proof}

\section{Non-special parahoric restriction}\label{sec:example}
In Theorem~\ref{thm:main} we make two technical assumptions: we assume that $\pi$ is parabolically induced and that $G_x$ is contained in a hyperspecial maximal parahoric subgroup. However, if we drop these assumptions, there is a counterexample.


\textit{Counterexample:} 
There is a generic irreducible admissible representation $\pi$ of $G=\mathrm{GSp}(4,k)$ and a parahoric subgroup $G_x\subseteq G$ such that the parahoric restriction $\mathbf{r}_{G_x}(\pi)$ is non-zero, but not generic.
\begin{proof}
Let $\xi$ be the non-trivial unramified quadratic character of $k^\times$. Let $G=\mathrm{GSp}(4,k)$ be the group of symplectic similitudes with respect to the symplectic form $\left(\begin{smallmatrix}&w\\-w&\end{smallmatrix}\right)$ for $w=\left(\begin{smallmatrix}&1\\1&\end{smallmatrix}\right)$. Fix the Borel pair $(B,T)$ where $B=T\ltimes U\subseteq G$ is the subgroup of upper triangular matrices and $T\subseteq B$ is the maximal torus of diagonal matrices. Fix a character
\begin{align*}
\alpha:T\to\mathbb{C}^\times,\quad \mathrm{diag}(a,b,c/a,c/b)\mapsto \xi(ab)\,|a|\,|c|^{-1/2}.
\end{align*}
Inflating $\alpha$ to $B$ gives rise to the induced representation $\Ind_B^G \alpha$, which admits a unique irreducible subrepresentation\footnote{This is type Va in the notation of Roberts and Schmidt \cite{Roberts-Schmidt}.} $\pi$. The representation $\pi$ is generic for arbitrary with respect to every generic character $\psi$ of $G$.

The standard paramodular subgroup $G_x\subseteq G$ is a non-special maximal parahoric subgroup.
The second author has shown \cite[Thm.\,3.7]{Roesner_thesis} that the parahoric restriction $\mathbf{r}_{G_x}(\pi)$ is isomorphic to the restriction of $\mathbf{1}\otimes\mathrm{St}\oplus \mathrm{St} \otimes \mathbf{1}$ to \begin{equation*}\{(\xi_1,\xi_2)\in(\mathrm{GL}(2,\mathbb{F}_q))^2\,|\,\det \xi_1=\det \xi_2\}\cong \underline{G_x}.\end{equation*} where $\mathbf{1}$ is the trivial and $\mathrm{St}$ is the Steinberg representation of $\mathrm{GL}(2,\mathbb{F}_q)$. Thus $\mathbf{r}_{G_x}(\pi)$ is not generic.
\end{proof}

\section*{Acknowledgment}
The authors are thankful to Sandeep Varma for pointing out a mistake in the earlier draft of this article.

\bibliographystyle{plain}
\bibliography{p_res}

\end{document}